\newcommand{\Ll}{\mathcal L}
\newcommand{\R}{\mathbb R}
\newcommand{\Ds}{\mathcal D}
\newcommand{\bb}{\boldsymbol b}
\newcommand{\Z}{\boldsymbol Z}
\newcommand{\RLF}{\boldsymbol X}
\newcommand{\x}{\boldsymbol x}
\newcommand{\y}{\boldsymbol y}
\newcommand{\0}{\boldsymbol 0}
\DeclareMathOperator{\diver}{div}
\newtheorem{theorem}{Theorem}[section]
\newtheorem{lemma}[theorem]{Lemma}
\newtheorem{e-claim}[theorem]{Claim}
\newtheorem{e-proposition}[theorem]{Proposition}
\newtheorem{assumption}[theorem]{Assumption}
\newtheorem{e-definition}[theorem]{Definition\rm}
\newtheorem{remark}[theorem]{\it Remark\/}
\newenvironment{proof}{\paragraph*{\bf Proof.}}{\hfill$\square$}
\def\og{\leavevmode\raise.3ex\hbox{$\scriptscriptstyle\langle\!\langle$~}}
\def\fg{\leavevmode\raise.3ex\hbox{~$\!\scriptscriptstyle\,\rangle\!\rangle$}}
\journal{the Acad\'emie des sciences}
\begin{document}
% place in the next line the header (rubrique) chosen for your article,
% if you know it (you can also have 2, format : Header1/Header2
\centerline{}
\begin{frontmatter}

% Title, authors and addresses

% use the thanksref command within \title, \author or \address for footnotes;
% use the ead command for the email address,
% and the form \ead[url] for the home page:
% \title{Title\thanksref{label1}}
% \thanks[label1]{}
% \author{Name\thanksref{label2}}
% \ead{email address}
% \ead[url]{home page}
% \thanks[label2]{}
% \address{Address\thanksref{label3}}
% \thanks[label3]{}
\selectlanguage{english}
\title{Uniqueness and Lagrangianity for solutions \\ with lack of integrability of the continuity equation}

% use optional labels to link authors explicitly to addresses:
% \author[label1,label2]{}
% \address[label1]{}
% \address[label2]{}
% The [label1] can be suppressed if there is only one address for all authors

\selectlanguage{english}
\author[LC]{Laura Caravenna},
\ead{laura.caravenna@unipd.it}
\author[GC]{Gianluca Crippa}
\ead{gianluca.crippa@unibas.ch}

\address[LC]{Dipartimento di Matematica `Tullio Levi Civita', Universit\`a di Padova, via Trieste 63, 35121 Padova, Italy}
\address[GC]{Departement Mathematik und Informatik, Universit\"at Basel, Spiegelgasse 1, 4051 Basel, Switzerland}

% If you know the dates of reception, and acceptation you can put them now;
%  idem the name of the person presenting the Note

\medskip
\begin{center}
{\small Received *****; accepted after revision +++++\\
Presented by £££££}
\end{center}

\begin{abstract}
\selectlanguage{english}
% Text of abstract in English
We deal with the uniqueness of distributional solutions to the continuity equation with a Sobolev vector field and with the property of being a Lagrangian solution, i.e.~transported by a flow of the associated ordinary differential equation. We work in a framework of lack of local integrability of the solution, in which the classical DiPerna-Lions theory of uniqueness and Lagrangianity of distributional solutions does not apply due to the insufficient integrability of the commutator. We introduce a general principle to prove that a solution is Lagrangian: we rely on a disintegration along the unique flow and on a new directional Lipschitz extension lemma, used to construct a large class of test functions in the Lagrangian distributional formulation of the continuity equation. 
%{\it To cite this article: A.
%Name1, A. Name2, C. R. Acad. Sci. Paris, Ser. I 340 (2005).}

\vskip 0.5\baselineskip

\selectlanguage{francais}
% Text of abstract in French
\noindent{\bf R\'esum\'e} \vskip 0.5\baselineskip \noindent
{\bf Unicit\'e et propri\'et\'e lagrangienne des solutions manquant d'int\'egrabilit\'e de l'\'equation de continuit\'e.}
On \'etudie l'unicit\'e des solutions distributionnelles de l'\'equation de continuit\'e avec des champs de vecteurs Sobolev et la propri\'et\'e d'\^etre une solution lagrangienne, c'est-\`a-dire une solution transport\'ee par le flot de l'\'equation diff\'erentielle ordinaire associ\'ee au champ de vecteurs.
On travaille dans un cadre o\`u les solutions consid\'er\'ees manquent d'int\'egrabilit\'e locale et o\`u on ne peut pas appliquer la th\'eorie classique de DiPerna-Lions d'unicit\'e des solutions distributionnelles et de la propri\'et\'e d'\^etre lagrangienne parce que on n'a pas assez d'int\'egrabilit\'e pour le commutateur.
On introduit un principe g\'en\'eral pour d\'emontrer la propri\'et\'e d'\^etre une solution lagrangienne: notre technique se base sur une desint\'egration le long le flot unique et sur un lemme d'extension lipschitzienne directionnelle qui nous permet de construire une vaste famille de fonction test pour la formulation distributionnelle lagrangienne de l'\'equation de continuit\'e.   
%{\it Pour citer cet article~: A. Name1, A. Name2, C. R. Acad. Sci.
%Paris, Ser. I 340 (2005).}

\end{abstract}
\end{frontmatter}

% now the Version franÁaise abrÈgÈe, if it exists
%\selectlanguage{francais}
%\section*{Version fran\c{c}aise abr\'eg\'ee}
% Text of your Version franÁaise abrÈgÈe here.
% Note you do not need to repeat here equations that you use in the
% main text - for example 'voir (3)' is quite acceptable.

\selectlanguage{english}
% main text
\section{Introduction and statement of the main result}
\label{}
% etc, etc

In this note we deal with the uniqueness of distributional solutions to the continuity equation with a Sobolev vector field and with the property of being a  Lagrangian solution, i.e.~transported by a flow of the associated ordinary differential equation. 

Let us first recall the by now classical DiPerna-Lions theory~\cite{DPL}. We fix $1\leq p \leq \infty$ and $T>0$ and we consider a vector field
\begin{equation}
\label{E:assumptionB}
\begin{aligned}
& \bb\in L^{1}\left([0,T];W_{\mathrm{loc}}^{1,p}(\R^{n};\R^{n})\right) \,,
\qquad \diver  \bb \in L^{1} \left( [0,T];L^\infty(\R^n) \right) \,, \\
& \frac{|\bb(t,\x)|}{1+|\x|} \in L^{1}\left( [0,T]; L^1(\R^{n}) \right)
+ L^{1}\left( [0,T] ; L^\infty(\R^{n}) \right)\,.
%\cap L^{\infty}\left([0,T)\times\R^{n}; \R^{n}\right)\,.
\end{aligned}
\end{equation}
Given an initial datum $u_{0}$, %\in L^{1}_{\mathrm{loc}}(\R^{n})$, 
we consider distributional solutions to the Cauchy problem for the continuity equation
\begin{equation}
\label{E:CPPDE}
\begin{cases}
\partial_{t} u + \diver (\bb  u)  = 0
\\
 u (t=0,\x)=u_{0}(\x)
\end{cases}
\qquad\text{in $\Ds'([0,T)\times\R^{n})$,}
\end{equation}
defined as usual by a formal ``integration by parts'' after testing the equation with Lipschitz test functions. Given a vector field $\bb$ as in~\eqref{E:assumptionB}, the DiPerna-Lions theory~\cite{DPL} guarantees uniqueness of distributional solutions
\begin{equation}
\label{e:class}
u \in L^\infty \left([0,T] ; L^q(\R^n) \right)
\end{equation}
to the problem~\eqref{E:CPPDE}, where $q$ is the conjugate exponent of $p$, that is, $1/p+1/q=1$. If $u_{0} \in L^q(\R^n)$ the existence of solutions in this class can be proved by an easy approximation procedure. Moreover, such unique solution is transported by the unique regular Lagrangian flow associated to $\bb$ (see Definition~\ref{d:rlf}). We remark that the theory of~\cite{DPL} has been extended to vector fields with bounded variation by Ambrosio~\cite{AMB}.% and that both papers make extensive use of the theory of renormalized solutions.

The need for considering solutions in the class~\eqref{e:class} follows from the strategy of proof in~\cite{DPL}, which consists in showing the renormalization property for distributional solutions. To this aim, the authors prove the convergence to zero of a suitable commutator, that can be rewritten as an integral expression involving essentially the product of~$D\bb$ and~$u$. However, distributional solutions to the Cauchy problem~\eqref{E:CPPDE} can be defined as long as the product~$\bb u \in L^1_{\mathrm{loc}}([0,T] \times \R^n)$. Therefore, the theory in~\cite{DPL} leaves open the question whether uniqueness holds for solutions with less integrability than~\eqref{e:class}. Ideally, the ``extreme'' case would be that of $\bb \in L^\infty \cap W^{1,1}$ and $u\in L^1$, both locally in space.

Our main result in this direction is the following:
\begin{theorem}
\label{T:main}
Let $\bb$ be a vector field as in~\eqref{E:assumptionB}, with $1<p\leq\infty$.
Assume in addition that $\bb(t,\cdot)$ is continuous for $\Ll^{1}$-a.e.~$t \in [0,T]$, with modulus of continuity on compact sets which is uniform in time. Then, given an initial datum $u_{0} \in L^{1}_{\mathrm{loc}}(\R^{n})$, the Cauchy problem for~\eqref{E:CPPDE} has a unique solution $$
u \in L^{1}_{\mathrm{loc}} \left([0,T]\times\R^{n}\right) \,.
$$
Such unique solution is Lagrangian and renormalized. 
\end{theorem}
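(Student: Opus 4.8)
\bigskip\noindent\textbf{Sketch of the argument.}
The plan is to separate \emph{existence} of a Lagrangian renormalized solution, which is essentially classical once one has the flow, from \emph{uniqueness}, which is the real point and which I would attack by testing the continuity equation against functions that are constant along the trajectories of~$\bb$. Since $1<p\leq\infty$ and $\bb$ satisfies~\eqref{E:assumptionB}, the DiPerna--Lions/Ambrosio theory provides a unique regular Lagrangian flow $\RLF(t,\cdot)$ of $\bb$ on $[0,T]$; write $\Z(t,\cdot)$ for its (a.e.\ defined) spatial inverse. Because $\diver\bb\in L^{1}([0,T];L^\infty)$, the Jacobian $\det\nabla_{\y}\RLF(t,\y)=\exp\!\big(\int_0^t(\diver\bb)(s,\RLF(s,\y))\,ds\big)$ is bounded above and below. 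I would then define the candidate solution by push-forward, $\bar u(t,\cdot)\,\Ll^{n}:=\RLF(t,\cdot)_\#(u_{0}\,\Ll^{n})$ (splitting $u_0$ into positive and negative parts), check that $\bar u\in L^\infty([0,T];L^1_{\mathrm{loc}})\subset L^1_{\mathrm{loc}}$, that it solves~\eqref{E:CPPDE} distributionally, and that it is renormalized: for a solution transported by a flow with two-sided Jacobian bounds this is standard, since $\beta(\bar u)$ is again produced by the flow and the Jacobian ODE yields the source term $(\beta(\bar u)-\bar u\,\beta'(\bar u))\diver\bb$. This disposes of existence, and reduces the theorem to uniqueness.

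So let $u\in L^{1}_{\mathrm{loc}}([0,T]\times\R^{n})$ be an arbitrary distributional solution, which in particular forces $\bb u\in L^1_{\mathrm{loc}}$; fix the representative for which $t\mapsto u(t,\cdot)$ is weakly continuous. Disintegrating along the partition of $[0,T]\times\R^n$ into the trajectories of $\RLF$, set $\mu_t:=\Z(t,\cdot)_\#(u(t,\cdot)\,\Ll^{n})$, a locally finite signed measure on $\R^n$; the assertion $u=\bar u$ is equivalent to $\mu_t=u_{0}\,\Ll^{n}$ for a.e.\ $t$. I would prove the latter by testing~\eqref{E:CPPDE} against functions of the form $\varphi(t,\x)=\eta(t)\,\zeta(\Z(t,\x))$, with $\eta\in C^{1}_{c}([0,T))$ and $\zeta\in C^{1}_{c}(\R^{n})$. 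Along $\Ll^{n+1}$-a.e.\ trajectory $t\mapsto\RLF(t,\y)$ one has $\varphi(t,\RLF(t,\y))=\eta(t)\zeta(\y)$, so the directional derivative $D_{(1,\bb)}\varphi:=\partial_t\varphi+\bb\cdot\nabla\varphi$ exists $\Ll^{n+1}$-a.e.\ and equals the bounded, compactly supported function $\eta'(t)\,\zeta(\Z(t,\x))$. If such $\varphi$ were an admissible test function,~\eqref{E:CPPDE} would read $\int_0^T\eta'(t)\langle\mu_t,\zeta\rangle\,dt=-\eta(0)\langle u_0\Ll^n,\zeta\rangle$ (using $\Z(0,\cdot)=\mathrm{id}$), i.e.\ $t\mapsto\langle\mu_t,\zeta\rangle$ is weakly constant and equal to $\langle u_0,\zeta\rangle$; ranging over a countable dense family of $\zeta$ gives $\mu_t=u_0\,\Ll^n$, hence $u=\bar u$. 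The non-compactness of the spatial support of $\varphi$ is harmless: the growth condition in~\eqref{E:assumptionB} prevents trajectories from reaching infinity in finite time, so a routine cut-off (with error controlled by that same condition) reduces to the compactly supported case.

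The crux is therefore the admissibility of $\varphi=\eta\,(\zeta\circ\Z)$ as a test function. This is delicate because $\zeta\circ\Z(t,\cdot)$ is in general neither Lipschitz nor even continuous in $\x$ — the inverse flow of a Sobolev field is only weakly regular — so $\varphi$ cannot be inserted into the distributional formulation directly. The remedy is the \emph{directional Lipschitz extension lemma}: approximate $\varphi$ by genuinely Lipschitz, uniformly bounded functions $\varphi_k$ supported in a fixed compact set, with $\varphi_k\to\varphi$ and $\varphi_k(0,\cdot)\to\varphi(0,\cdot)$ a.e.\ (for $\Ll^{n+1}$, resp.\ $\Ll^{n}$), such that the \emph{directional} derivatives $D_{(1,\bb)}\varphi_k=\partial_t\varphi_k+\bb\cdot\nabla\varphi_k$ stay \emph{uniformly bounded} and converge $\Ll^{n+1}$-a.e.\ to $\eta'\,(\zeta\circ\Z)$, even though the full gradients $\nabla\varphi_k$ need not be bounded. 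Granting such a sequence, write the distributional identity $\int u\,\partial_t\varphi_k+\int\bb u\cdot\nabla\varphi_k=-\int u_0\,\varphi_k(0,\cdot)$, recombine the left-hand side pointwise as $\int u\,D_{(1,\bb)}\varphi_k$, and pass to the limit by dominated convergence — legitimate precisely because $u\in L^1_{\mathrm{loc}}$, the supports are uniformly compact, and $D_{(1,\bb)}\varphi_k$ is uniformly bounded and a.e.\ convergent — which produces the identity of the previous paragraph and finishes the proof. Once $u=\bar u$, the Lagrangian and renormalized properties follow from those of $\bar u$ recorded above.

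I expect the construction of the $\varphi_k$, i.e.\ the directional Lipschitz extension lemma itself, to be the main obstacle. The idea would be to work in small coordinate boxes in which, thanks to the \emph{continuity of $\bb$ in $\x$ with a modulus of continuity uniform in $t$} (this is exactly where that hypothesis enters, together with $p>1$ for the flow), the direction field $(1,\bb)$ is almost constant; there one straightens the direction, mollifies $\varphi$ along it, and patches the pieces by a partition of unity. The quantitative heart is to tune the mollification scale against the modulus of continuity of $\bb$ so that the derivative of $\varphi_k$ \emph{in the direction $(1,\bb)$} remains controlled by the (bounded) directional derivative of $\varphi$, uniformly in $k$, while still forcing a.e.\ convergence of both $\varphi_k$ and $D_{(1,\bb)}\varphi_k$; keeping these two requirements compatible, and handling the overlaps of the boxes, is where I would expect the technical work to concentrate.
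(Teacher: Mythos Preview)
Your overall strategy coincides with the paper's: reduce uniqueness to inserting, into the distributional formulation of~\eqref{E:CPPDE}, test functions of the form $\varphi(t,\x)=\Psi(t,\RLF(0,t,\x))$ that are constant along trajectories, and recognise that the obstruction is the lack of Lipschitz regularity of the inverse flow, so that $\varphi$ must be approximated by genuinely Lipschitz functions whose \emph{directional} derivative $\partial_t\varphi+\bb\cdot\nabla\varphi$ stays uniformly bounded. The paper carries this out exactly as you say, via a change of variables along $\RLF$ and a dominated-convergence error estimate.

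Where you diverge is in the construction of the approximants. The paper does \emph{not} mollify or straighten locally. Instead it uses a Lusin-type argument: Theorem~\ref{T:regularity} (Crippa--De Lellis, and this is precisely where $p>1$ enters) produces a compact $K_\varepsilon\subset B_R(\0)$ with $\Ll^n(B_R\setminus K_\varepsilon)\leq\varepsilon$ on which $\RLF(t,s,\cdot)$ is Lipschitz uniformly in $t,s$; on the resulting flow tube, $\psi(t,\x):=\Psi(t,\RLF(0,t,\x))$ is already Lipschitz \emph{and} $(L,\RLF)$-directionally Lipschitz with $L=\mathrm{Lip}(\Psi)$. The ``directional Lipschitz extension lemma'' (Lemma~\ref{l:extension}) then extends $\psi$ from this tube to all of $[0,T]\times\R^n$ as a function that is simultaneously Lipschitz for the Euclidean distance and $(L',\RLF)$-directionally Lipschitz with $L'$ close to $L$ and \emph{independent of $\varepsilon$}. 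The extension is obtained by McShane's formula applied to a one-parameter family of distances $d_\lambda$ interpolating between the Euclidean distance and a degenerate distance that is $|t-t'|$ along trajectories and $+\infty$ transversally; the continuity assumption on $\bb$ is used to prove that $\mathrm{Lip}(\psi;d_\lambda)\to L$ as $\lambda\downarrow 0$. The error term lives on $[0,T]\times(B_R\setminus K_\varepsilon)$ and is controlled by $(L+L')\int_{[0,T]\times(B_R\setminus K_\varepsilon)}|U|$, which vanishes as $\varepsilon\downarrow 0$ since $U\in L^1_{\mathrm{loc}}$.

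Your mollification-in-boxes scheme is a plausible alternative, but as written it is underspecified at the point that matters: you need the approximants to be globally Lipschitz (so that they are admissible in~\eqref{E:CPPDE}), yet with directional Lipschitz constant bounded \emph{independently of the approximation parameter}. Straightening $(1,\bb)$ in a box only ``up to the modulus of continuity'' does not by itself give a global Lipschitz bound, and patching by a partition of unity introduces transversal derivatives that can blow up with the box scale; you have not said how these are absorbed. The paper's route sidesteps this by separating the two requirements: Lusin regularity of $\RLF$ gives a domain where $\psi$ is already Lipschitz, and the McShane-with-$d_\lambda$ device handles the tension between Euclidean and directional Lipschitz continuity in the extension. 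If you pursue your approach, you should expect to need Theorem~\ref{T:regularity} (or an equivalent quantitative statement) somewhere, since that is the only place the hypothesis $p>1$ has content.
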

\begin{remark}
The continuity assumption on the vector field in Theorem~\ref{T:main} is satisfied for example when $\bb\in L^{\infty} \left([0,T];W_{\mathrm{loc}}^{1,p}(\R^{n})\right)$ with $p>n$.
\end{remark}
\begin{remark}
Theorem~\ref{T:main} can be easily extended to the case where a source term or a linear term of zero order are present in the continuity equation, under suitable integrability conditions on the coefficients. In particular, we can also deal with the transport equation \[\partial_t u + \bb \cdot \nabla u = 0\] instead of the continuity equation~\eqref{E:CPPDE}.
\end{remark}

\medskip

Let us describe in few words the strategy of the proof of Theorem~\ref{T:main}. Given a distributional solution $u \in L^{1}_{\mathrm{loc}} \left([0,T]\times\R^{n}\right)$ of the Cauchy problem~\eqref{E:CPPDE} we aim at proving that it is transported by the regular Lagrangian flow $\RLF$ associated to $\bb$. To this aim, we change variable using the flow in the distributional formulation of~\eqref{E:CPPDE}. However, due to the lack of Lipschitz regularity of the flow with respect to the space variable, we do not obtain yet the Lagrangian formulation in distributional sense: after the change of variable we do not obtain the full class of test functions.

Nevertheless some regularity of the flow ``on large sets'' is in fact available (see Theorem~\ref{T:regularity}). This guarantees that the test function we obtain is Lipschitz on a ``large flow tube'', although with a possibly large Lipschitz constant. We need to extend this function to a globally Lipschitz test function. The key remark is that, in order to estimate the error resulting from this extension, only the Lipschitz constant along the characteristics is relevant, not the global Lipschitz constant. We then implement a ``directional extension lemma'' (Lemma~\ref{l:extension}), stating that we can construct an extension which is both globally Lipschitz and directionally Lipschitz along the flow, and the directional Lipschitz constant can be estimated quantitatively. This allows to conclude the proof. 

After presenting in \S\ref{s:prelim} some background material, in \S\ref{s:proof} we give a complete proof of Theorem~\ref{T:main}, under the additional Assumption~\ref{A:srfe} on the existence of a directional Lipschitz extension. In \S\ref{S:lemmaLip} we sketch a proof of the validity of Assumption~\ref{A:srfe} under the continuity assumptions on the vector field in Theorem~\ref{T:main}. A complete proof is deferred to the follow up paper~\cite{CC2}.

\section{Some preliminaries} 
\label{s:prelim}

%\begin{e-definition}
%\label{D:nearlyincompressible}
%We say that a field $\bb\in L^{\infty}\left([0,T)\times\R^{n}; \R^{n}\right)$ is \textbf{nearly incompressible} if there
%exists a function $\rho\in L^{\infty}\left([0,T)\times\R^{n}\right)$ and a positive constant $C$ such that $C^{-1}\leq \rho \leq C$ and 
%\begin{equation}
%\begin{cases}
%\partial_{t}\rho + \diver(\rho \bb)=0 \\
%\rho(t=0,\x)=1
%\end{cases}
%\qquad\text{in $\Ds'([0,T)\times\R^{n})$.}
%\end{equation}
%\end{e-definition}

In the non smooth context the suitable notion of flow of a vector field is that of regular Lagrangian flow, introduced in the following form in~\cite{AMB}:
\begin{e-definition}
\label{d:rlf} 
We say that a map $\RLF:[0,T]^{2}\times\R^{n}\to\R^{n}$ is a regular Lagrangian flow associated to the vector field $\bb$ if
\begin{enumerate}
\item \label{item:1RLF} For  $(t,s) \in [0,T]^2$ we have $C^{-1}\Ll^{n}\leq \RLF(t,s,\cdot)_{\#}\Ll^{n}\leq C\Ll^{n}$.
\item For $\Ll^{n}$-a.e.~$\x\in\R^n$ the map~$\RLF(\cdot, s,\x)$ satisfies the ordinary differential equation
\begin{equation}
\label{E:ODEsRLF}
\begin{cases}
\partial_{t}\RLF(t,s,\x)=\bb(t,\RLF(t,s,\x))\\
\RLF(s,s,\x)=\x
\end{cases}
\qquad\text{in $\Ds'([0,T))$.}
\end{equation}
\end{enumerate}
\end{e-definition}
We notice that
\[
\text{$\x\mapsto\RLF(0,t,\x)$\qquad is the inverse of\qquad$\x\mapsto\RLF(t,0,\x)$.}
\]
For later use we set
\begin{equation}
\label{e:rho}
\rho(t,\cdot) \Ll^n := \RLF(t,0,\cdot)_\# \Ll^n \,, \qquad
R(t,\y) := \rho(t,\RLF(t,0,\y))
\end{equation}
and observe that by Definition~\ref{d:rlf}(i) we have 
\begin{equation}
\label{e:boundrho} 
C^{-1} \leq \rho(t,\x) \leq C\,, \qquad
C^{-1} \leq R(t,\y) \leq C\,.
\end{equation}

The theory in~\cite{DPL,AMB} guarantees that, given a vector field $\bb$ as in~\eqref{E:assumptionB}, there exists a unique regular Lagrangian flow associated to it. Moreover, in~\cite{CDL} the following regularity of the regular Lagrangian flow has been proved:
\begin{theorem}
\label{T:regularity}
Let $\bb$ be a vector field as in~\eqref{E:assumptionB} and let $\RLF$ be the associated regular Lagrangian flow. Assume that $1<p\leq\infty$. Then, for all $R>0$ and $\varepsilon>0$ there exists a compact set $K_{\varepsilon} \subset B_{R}(\0)$ such that 
\begin{enumerate}
\item $\RLF(t,s,\cdot)$ is Lipschitz continuous on $ K_{\varepsilon}$,  uniformly w.r.t.~$t,s\in[0,T]$.
\item $\Ll^{n}(B_{R}(\0)\setminus K_{\varepsilon})\leq \varepsilon$.
\end{enumerate}
\end{theorem}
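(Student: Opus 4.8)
The plan is to follow the approach of~\cite{CDL}: derive an a priori estimate for the spatial difference quotients of the flow by combining the ordinary differential equation~\eqref{E:ODEsRLF} with the maximal-function inequality for Sobolev functions, and then extract the set $K_\varepsilon$ by a Chebyshev argument. Before that I would make two reductions. By the flow property $\RLF(t,s,\cdot)=\RLF(t,0,\cdot)\circ\RLF(0,s,\cdot)$ together with the compressibility bound of Definition~\ref{d:rlf}(i), it is essentially enough to control $\RLF(t,0,\cdot)$ uniformly in $t\in[0,T]$; the uniformity with respect to $s$ as well is then recovered by running the same argument with an arbitrary initial time in place of $0$. Moreover, the superlinear growth condition in~\eqref{E:assumptionB}, via a logarithmic Gronwall estimate applied to $t\mapsto\int_{B_R(\0)}\log(1+|\RLF(t,0,\x)|)\,d\x$, shows that for every $\varepsilon>0$ there is $R'=R'(R,T,\varepsilon)$ such that, off a set of measure $\le\varepsilon$, all trajectories issued from $B_R(\0)$ stay in $B_{R'}(\0)$ on $[0,T]$; so I may work as if the relevant trajectories were confined to a fixed ball.

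For the a priori estimate I would fix $\x,\y\in B_R(\0)$, $\x\neq\y$, for which both trajectories solve~\eqref{E:ODEsRLF} and stay in $B_{R'}(\0)$, and set $a(t):=|\RLF(t,0,\x)-\RLF(t,0,\y)|$, so that $a(0)=|\x-\y|$ and, for a.e.~$t$, $a'(t)\le|\bb(t,\RLF(t,0,\x))-\bb(t,\RLF(t,0,\y))|$. I then invoke the pointwise inequality for Sobolev functions: for $\Ll^1$-a.e.~$t$ and $\Ll^n$-a.e.~pair of points of $B_{R'}(\0)$,
\[ \big|\bb(t,\RLF(t,0,\x))-\bb(t,\RLF(t,0,\y))\big|\le C_n\,a(t)\,\big(M|D\bb|(t,\RLF(t,0,\x))+M|D\bb|(t,\RLF(t,0,\y))\big), \]
where $M$ is the Hardy--Littlewood maximal operator localized to a larger ball $B_{R''}(\0)$; the compressibility bound of Definition~\ref{d:rlf}(i) is what lets one pass from ``$\Ll^n$-a.e.~pair of points'' to ``$\Ll^n$-a.e.~pair $\x,\y$''. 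Gronwall's inequality then yields, for $\Ll^n$-a.e.~$\x,\y\in B_R(\0)$ and all $t\in[0,T]$,
\[ |\RLF(t,0,\x)-\RLF(t,0,\y)|\le|\x-\y|\,\exp\big(h(\x)+h(\y)\big),\qquad h(\x):=C_n\int_0^T M|D\bb|\big(\tau,\RLF(\tau,0,\x)\big)\,d\tau. \]

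The key remaining point — and the one where the hypothesis $p>1$ is used crucially — is that $h\in L^1(B_R(\0))$. Since $M$ is bounded on $L^p$ for $p>1$, one has $\|M|D\bb|(\tau,\cdot)\|_{L^p(B_{R''}(\0))}\le C(n,p)\|D\bb(\tau,\cdot)\|_{L^p(B_{R''}(\0))}$ for a.e.~$\tau$; integrating $h$ over $B_R(\0)$, changing variables along the flow by the compressibility bound, and then using H\"older's inequality together with~\eqref{E:assumptionB},
\[ \int_{B_R(\0)}h\,d\x\le C_n\,C\int_0^T\|M|D\bb|(\tau,\cdot)\|_{L^1(B_{R'}(\0))}\,d\tau\le C(n,p,R,R')\int_0^T\|D\bb(\tau,\cdot)\|_{L^p(B_{R''}(\0))}\,d\tau<\infty. \]
To conclude, given $\varepsilon>0$ I would pick $\lambda$ so large that $\Ll^n(\{\x\in B_R(\0):h(\x)>\lambda\})\le\lambda^{-1}\|h\|_{L^1}\le\varepsilon/2$ by Chebyshev; on $\{h\le\lambda\}$ the bound above reads $|\RLF(t,0,\x)-\RLF(t,0,\y)|\le e^{2\lambda}|\x-\y|$ for $\Ll^n$-a.e.~pair and every $t$, so choosing a good representative makes $\RLF(t,0,\cdot)$ genuinely $e^{2\lambda}$-Lipschitz on a full-measure subset of $\{h\le\lambda\}$; intersecting with the set from the growth reduction and passing to a compact subset gives $K_\varepsilon$ with $\Ll^n(B_R(\0)\setminus K_\varepsilon)\le\varepsilon$. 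This establishes the two required properties for $\RLF(t,0,\cdot)$, and the fully uniform statement follows as indicated (details in~\cite{CDL}).

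I expect the main obstacle to be precisely this maximal-function step: the pointwise Sobolev inequality combined with the $L^p$-boundedness of the maximal operator is what makes the whole estimate close, and it genuinely breaks down at $p=1$ (where $M$ is unbounded on $L^1$), consistently with the fact that no Lusin--Lipschitz regularity of the flow should be expected for a merely $W^{1,1}$ vector field. The other difficulties are of a measure-theoretic nature: ensuring the simultaneous validity of the ordinary differential equation and of the pointwise Sobolev inequality along $\Ll^n$-a.e.~pair of trajectories (which is where the compressibility bound and Fubini enter), upgrading an a.e.~estimate to genuine Lipschitz continuity on a compact set, and obtaining the uniformity in \emph{both} time variables $t$ and $s$.
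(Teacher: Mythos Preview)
The paper does not give its own proof of this theorem: it is quoted as a known result from~\cite{CDL}, with the only comment being that ``the restriction to the case $p>1$ \ldots\ is due to the use of some harmonic analysis estimates in its proof.'' Your proposal is a faithful sketch of precisely that~\cite{CDL} argument --- the pointwise maximal-function inequality for Sobolev functions, Gronwall along pairs of trajectories, the $L^p$-boundedness of the maximal operator (which is the ``harmonic analysis estimate'' alluded to and the reason for $p>1$), and then Chebyshev/Lusin to extract the compact set --- so there is nothing to compare: you have reproduced the approach the paper is citing.
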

The restriction to the case $p>1$ in Theorem~\ref{T:regularity} and therefore in Theorem~\ref{T:main} is due to the use of some harmonic analysis estimates in its proof.

We finally introduce the following concept of directional Lipschitz continuity:
\begin{e-definition}
\label{d:directional}
Let $\phi$ be defined on a Borel set $B \subset [0,T]\times\R^n$ and let $\Z(t,\y): [0,T]\times A\to \R^{n}$ be a Borel map, where $A \subset \R^n$ is a Borel set.
We say that the function $\phi$ is $(L,\Z)$-directionally Lipschitz continuous if for all $t,t'\in[0,T]$ and for all $\y\in A$ such that $\Z(t,\y), \Z(t',\y)\in B$ there holds
$$
|\phi(t,\Z(t,\y))-\phi(t',\Z(t',\y))|\leq L |t-t'| \,.
$$
\end{e-definition}
We focus in this paper only on directional Lipschitz continuity in the specific case $\Z(t,\y)=\RLF(t,0,\y)$, where $\RLF$ is a regular Lagrangian flow.
%However, the theory of directional Lipschitz extensions that we will develop in~\cite{CC2} will extend to more general settings.

%
%
%\begin{remark}
%The hypothesis are not the sharpest ones, but they are intended to better clarify this exposition. We stress that we exploit the existence of a Regular Lagrangian Flow (RLF) for $\bb$ which satisfies additional regularity estimates like the ones in~\cite{CDL}.
%\end{remark}
%

%:
%:
\section{Proof of Theorem~\ref{T:main}: disintegration along the regular Lagrangian flow} % under Assumption~\ref{A:srfe}}
\label{s:proof}

In this section we give a complete proof of Theorem~\ref{T:main}, under the additional Assumption~\ref{A:srfe} on the existence of a directional Lipschitz extension that we introduce in Step~2 here below. A proof of Assumption~\ref{A:srfe} is sketched in \S\ref{S:lemmaLip} below and a full proof deferred to~\cite{CC2}.

\paragraph*{\textbf{Step 0.}} 
By the linearity of the continuity equation~\eqref{E:CPPDE}, it is enough to prove that $u_{0} \equiv 0$ implies $u \equiv 0$. We do this by showing that every distributional solution $u$ of~\eqref{E:CPPDE} satisfies a Lagrangian formulation. In this context this amounts to the fact that the function
\begin{equation}
\label{E:capU}
U(t,\y):=u(t,\RLF(t,0,\y))
\end{equation}
solves in distributional sense the equation $\partial_{t}[U/R]=0$, where $R$ is defined in~\eqref{e:rho}, with initial datum $U(t=0,\y)\equiv0$, that is
\begin{equation}
\label{E:lagrangian}
\int_0^T \int_{\R^{n}} \frac{U(t,\y)}{R(t,\y)} \, \partial_{t}\Psi(t,\y)\,dtd\y=0 
\qquad \text{for all test functions $\Psi(t,\y)=\Psi_{1}(t)\Psi_{2}(\y)$}\,,
\end{equation}
where $\Psi_{1}(t)\in \mathrm{Lip}_{\textrm c}([0,T))$ and $\Psi_{2} \in L^{\infty}_{\textrm c}(\R^{n})$, the spaces of Lipschitz functions with compact support, and of essentially bounded functions with compact support, respectively. Notice that the validity of~\eqref{E:lagrangian} implies that $U\equiv0$, and thus with~\eqref{E:capU} we obtain $u \equiv 0$. Since $\mathrm{Lip}_{\textrm c}(\R^{n})$ is dense in $L^{\infty}_{\textrm c}(\R^{n})$ with respect to the weak star topology of $L^\infty(\R^n)$, we reduced the proof of Theorem~\ref{T:main} to the proof of the following claim:
\begin{e-claim}
\label{c:claim}
The Lagrangian formulation~\eqref{E:lagrangian} holds for every $\Psi \in \mathrm{Lip}_{\textrm c}([0,T) \times \R^{n})$.
\end{e-claim}
We fix  
\begin{equation}
\label{e:L}
\text{a function\quad $\Psi$\quad as in Claim~\ref{c:claim} and we set \quad$L = \mathrm{Lip}(\Psi)$.}
\end{equation}
We prove in the next steps that Claim~\ref{c:claim} holds.

\paragraph*{\textbf{Step 1.}} Fix $\varepsilon>0$ and consider a compact set of the form $[0,T]\times B_{R}(\0)$ which contains the support of the function $\Psi$ fixed in~\eqref{e:L}. We use Theorem~\ref{T:regularity} to find a compact subset $K_{\varepsilon} \subset B_{R}(\0)$ on which the regular Lagrangian flow $\RLF(t,s,\cdot)$ is uniformly Lipschitz continuous. 

\begin{lemma}
\label{L:sfsdva}
On the compact flow tube $\{\RLF(t,0,K_{\varepsilon})\}_{t\in[0,T]}$ starting from $K_{\varepsilon}$ the function
\begin{equation}
\label{E:testtx}
\psi(t,\x):=\Psi(t,\RLF( 0, t,\x))
\qquad
\text{for $ \x \in \RLF(t,0,K_{\varepsilon})$}
\end{equation}
is Lipschitz continuous and $(L,\RLF)$-directionally Lipschitz continuous, with $L$ as in~\eqref{e:L}.
\end{lemma}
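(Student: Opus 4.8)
The statement has two parts: that $\psi$ is Lipschitz continuous (in both variables jointly) on the compact flow tube, and that it is $(L,\RLF)$-directionally Lipschitz. Let me think about each.

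For the directional part: by Definition of directional Lipschitz continuity, I need to bound $|\psi(t,\RLF(t,0,\y)) - \psi(t',\RLF(t',0,\y))|$ for $\y$ such that both points lie in the tube. But $\psi(t,\RLF(t,0,\y)) = \Psi(t,\RLF(0,t,\RLF(t,0,\y))) = \Psi(t,\y)$ — the flow and its inverse cancel. So this difference is just $|\Psi(t,\y)-\Psi(t',\y)| \le \mathrm{Lip}(\Psi)|t-t'| = L|t-t'|$. That's immediate — the directional constant is exactly $L$ because along characteristics $\psi$ reduces to $\Psi$ evaluated at the fixed Lagrangian label.

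For the joint Lipschitz part on the tube: here I need to use Theorem~\ref{T:regularity}(i), which says $\RLF(t,s,\cdot)$ is Lipschitz on $K_\varepsilon$ uniformly in $t,s$. The inverse map $\x\mapsto\RLF(0,t,\x)$ restricted to $\RLF(t,0,K_\varepsilon)$ is the inverse of $\y\mapsto\RLF(t,0,\y)$ on $K_\varepsilon$; by Definition~\ref{d:rlf}(i) (the two-sided bound on pushforwards / non-degeneracy) combined with uniform Lipschitzness, this inverse is also uniformly Lipschitz — so I get a uniform constant $\Lambda$ with $|\RLF(0,t,\x)-\RLF(0,t',\x')|\le \Lambda(|t-t'|+|\x-\x'|)$ on the tube (the time-Lipschitzness in the first slot of $\RLF$ comes from the ODE~\eqref{E:ODEsRLF} together with the local boundedness of $\bb$ on the compact tube). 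Then $\psi$ is the composition of this Lipschitz map with $\Psi$, hence Lipschitz with constant $\le L\cdot\Lambda$ on the tube. I should be slightly careful about the precise sense in which the ODE gives pointwise-in-$t$ Lipschitz bounds (it holds after choosing the continuous representative of $\RLF(\cdot,t,\x)$, which is legitimate since $\bb$ is bounded on the relevant compact set).

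The main obstacle is the Lipschitz continuity of the inverse flow map on the image set $\RLF(t,0,K_\varepsilon)$ with a constant uniform in $t$: one must argue that uniform Lipschitzness of $\RLF(t,0,\cdot)$ on $K_\varepsilon$ plus the quantitative non-concentration estimate~\ref{d:rlf}(i) forces the inverse to be uniformly Lipschitz, and that the "large set" $K_\varepsilon$ can be chosen compatibly for all the maps $\RLF(t,s,\cdot)$ at once (which is exactly the content of Theorem~\ref{T:regularity}, so this is available). Everything else — the cancellation of $\RLF$ with $\RLF(0,t,\cdot)$, the reduction of the directional estimate to $\mathrm{Lip}(\Psi)$, the time-regularity from the ODE on a compact set — is routine.
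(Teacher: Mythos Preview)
Your argument for the $(L,\RLF)$-directional Lipschitz continuity is correct and is exactly the paper's: along a characteristic, $\psi(t,\RLF(t,0,\y))=\Psi(t,\y)$ and the estimate reduces to $\mathrm{Lip}(\Psi)=L$.

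For the joint Lipschitz continuity your strategy differs from the paper's in organization. You want to show directly that $(t,\x)\mapsto \RLF(0,t,\x)$ is Lipschitz on the tube and then compose with $\Psi$. The paper instead fixes two tube points $(t,\x)=(t,\RLF(t,0,\y))$ and $(t',\x')=(t',\RLF(t',0,\y'))$, inserts the intermediate point $\x^{*}:=\RLF(t,0,\y')$, and splits
\[
|\psi(t,\x)-\psi(t',\x')|\le |\psi(t,\x)-\psi(t,\x^{*})|+|\psi(t,\x^{*})-\psi(t',\x')|.
\]
The second term is controlled by the directional estimate already established; the first uses only the space-Lipschitz of the inverse flow at the \emph{single} time $t$; and finally $|\x-\x^{*}|\le |\x-\x'|+|\x'-\x^{*}|$ with $|\x'-\x^{*}|=|\RLF(t',0,\y')-\RLF(t,0,\y')|\le \lVert \bb\rVert_{\infty}|t-t'|$ from the ODE in the \emph{first} slot. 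This avoids ever discussing Lipschitz dependence of $\RLF(0,s,\x)$ on the initial time $s$, which is what your formulation implicitly needs and which does not come straight from~\eqref{E:ODEsRLF}.

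There is, however, a genuine gap in your justification of the key ingredient that both approaches share, namely that $\y\mapsto \RLF(t,0,\y)$ is \emph{bi}-Lipschitz on $K_\varepsilon$ (equivalently, that $\RLF(0,t,\cdot)$ is Lipschitz on the slice $\RLF(t,0,K_\varepsilon)$). You propose to deduce this from Definition~\ref{d:rlf}\eqref{item:1RLF} combined with the uniform Lipschitz bound. But note that the \emph{lower} pushforward bound $\RLF(t,0,\cdot)_{\#}\Ll^n\ge C^{-1}\Ll^n$ is automatic for any injective $\Lambda$-Lipschitz map (with $C^{-1}=\Lambda^{-n}$), so it adds nothing. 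The \emph{upper} pushforward bound does give $|\det D_{\y}\RLF(t,0,\y)|\ge C^{-1}$ a.e., which together with $|D_{\y}\RLF|\le\Lambda$ yields an a.e.\ lower bound on the smallest singular value; but for merely Lipschitz maps the passage from an a.e.\ differential lower bound to a global co-Lipschitz inequality is not immediate and needs an argument you have not supplied. The clean fix is to recall that the estimate behind Theorem~\ref{T:regularity} (from~\cite{CDL}) actually controls
\[
\left|\log\frac{|\RLF(t,0,\y)-\RLF(t,0,\y')|}{|\y-\y'|}\right|
\]
on $K_\varepsilon$, hence gives a two-sided (bi-Lipschitz) bound directly; you should invoke that rather than the compressibility estimate. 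With this correction, either your composition argument or the paper's intermediate-point decomposition goes through.
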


\begin{proof} We start by proving the $(L,\RLF)$-directional Lipschitz continuity. Let
$$
\x=\RLF(t,0,\y) \; \text{ and } \; \x'=\RLF(t',0,\y)\,,
\quad
\text{ so that by~\eqref{E:testtx}} 
\quad
\psi(t,\x)=\Psi(t,\y)
\; \text{ and } \;
\psi(t',\x')=\Psi(t',\y) \,, 
$$
and thus by~\eqref{e:L} we get
\[
|\psi(t,\x)-\psi(t',\x')|=|\Psi(t,\y)-\Psi(t',\y)|\leq \mathrm{Lip}(\Psi) |t-t'|=L|t-t'|\,.
\]
%This proves the second statement of this Lemma~\ref{L:sfsdva}.
We now prove the Lipschitz continuity of $\psi$ on $\{\RLF(t,0,K_{\varepsilon})\}_{t\in[0,T]}$. Given $\x,\x^{*}\in\RLF(t,0,K_{\varepsilon})$ one has
\[
|\psi(t,\x)-\psi(t,\x^{*})|
= |\Psi(t,\RLF( 0, t,\x))-\Psi(t,\RLF( 0, t,\x^{*}))|
\leq \mathrm{Lip}(\Psi) \cdot \mathrm{Lip}(\RLF(0,t,\cdot)|_{K_{\varepsilon}})\, |\x-\x^{*}|\,.
\]
When comparing two points $\x\in\RLF(t,0,\y)$ and $\x'\in\RLF(t',0,\y')$, for some $\y,\y'\in K_{\varepsilon}$, we simply define $\x^{*}=\RLF(t,0,\y')$ and we estimate
\begin{align*}
|\psi(t,\x)-\psi(t',\x')|&\leq|\psi(t,\x)-\psi(t,\x^{*})|+|\psi(t,\x^{*})-\psi(t',\x')|
\\
&\leq  C(\mathrm{Lip}(\Psi),\mathrm{Lip}(\RLF(0,t,\cdot)|_{K_{\varepsilon}}),\lVert\bb\rVert_{\infty})
		\cdot(|\x-\x^{*}| + |t-t'| )
\\
&\leq  C(\mathrm{Lip}(\Psi),\mathrm{Lip}(\RLF(0,t,\cdot)|_{K_{\varepsilon}}),\lVert\bb\rVert_{\infty})
		\cdot(|t-t'|+|\x-\x'|)\,,
\end{align*}
where in the last inequality we applied 
\[
|\x-\x^{*}|\leq |\x-\x'|+|\x'-\x^{*}| 
\quad \text{ and } \quad
|\x^{*}-\x'|\leq\lVert\bb\rVert_{\infty}|t'-t|\,.
\]
This concludes the proof of the lemma. 
\end{proof}

\paragraph*{\textbf{Step 2.}}
We can proceed with the proof under the following assumption. 
\begin{assumption}
\label{A:srfe}
Given $\varepsilon>0$ let $\psi$ be as in~\eqref{E:testtx}. We assume that there exists $\psi_{\varepsilon} : [0,T] \times \R^{n}\to\R$ which is an extension of $\psi$ and in addition is
\begin{enumerate}
\item Lipschitz continuous, and 
\item $(L',\RLF)$-directionally Lipschitz continuous, where $L'>0$ does not depend on $\varepsilon$.
\end{enumerate}
\end{assumption}

In fact, we are able to prove that Assumption~\ref{A:srfe} holds when the vector field $\bb$ satisfies the continuity condition assumed in Theorem~\ref{T:main}. In Section~\ref{S:lemmaLip} we give a sketch of the proof of this fact, and we defer a complete proof to a next paper.

\paragraph*{\textbf{Step 3.}}
We now derive some consequences of Assumption~\ref{A:srfe} in the $(t,\y)$-variables. We define
\begin{equation}
\label{e:capPsi}
\Psi_{\varepsilon}(t,\y) := \psi_{\varepsilon}(t,\RLF(t,0,\y))
\qquad
\text{for $t \in [0,T]$ and $\x \in \R^n$}
\end{equation}
and we observe that 
\begin{enumerate}
\item \label{i:szvds} $\Psi_{\varepsilon}(\cdot,\y)$ is $L'$-Lipschitz continuous for all $\y$.
This follows from Assumption~\ref{A:srfe}(ii) and from the definition of directional Lipschitz continuity (Definition~\ref{d:directional}).
%
%
%is just because $\psi_{\varepsilon}$ is $(L',\RLF)$-directionally Lipschitz continuous, as sated in Assumption~\ref{A:srfe}, by the definition of $(L',\RLF)$-directionally Lipschitz continuity.
\item $\Psi_{\varepsilon}(t,\y)\equiv \Psi_{}(t,\y)$ for every $\y\in K_{\varepsilon}$ and every $t \in [0,T]$.
\end{enumerate}
In particular, we can test $\partial_{t}[U/R](t,\y)$ agains $\Psi_{\varepsilon}(t,\y)$: by the definitions in~\eqref{E:capU} and~\eqref{e:capPsi} we obtain
\begin{align*}
\int_0^T \int_{\R^n} &\frac{U(t,\y)}{R(t,\y)} \, \partial_{t}\Psi_{\varepsilon}(t,\y)\,dtd\y
=
\int_0^T \int_{\R^n} \frac{u(t,\RLF(t,0,\y))}{\rho(t,\RLF(t,0,\y))} \, \frac{d}{dt}\psi_{\varepsilon}(t,\RLF(t,0,\y)) \,dtd\y
\\
&=
\int_0^T \int_{\R^n} \frac{u(t,\RLF(t,0,\y))}{\rho(t,\RLF(t,0,\y))} \,
\left[\big(\partial_{t}\psi_{\varepsilon}\big)(t,\RLF(t,0,\y))+
\bb(t,\RLF(t,0,\y))\cdot\big(\nabla \psi_{\varepsilon}\big)(t,\RLF(t,0,\y))\right]\,dtd\y\,.
\end{align*}
We now apply the change of variable $ \x=\RLF(t,0,\y)$, obtaining 
%in order to pass to the `Eulerian' version of the PDE. 
%Denoting $\rho(t,\cdot)\Ll^{n} = \RLF(t,0,\cdot)_{\#}\Ll^{n}$ we obtain
\begin{equation}
\label{E:PDEe}
\int_0^T \int_{\R^n} \frac{U(t,\y)}{R(t,\y)} \, \partial_{t}\Psi_{\varepsilon}(t,\y)\,dtd\y
=
\int_0^T \int_{\R^n} u(t,\x)\left[\partial_{t}\psi_{\varepsilon}(t,\x)+\bb(t,\x)\cdot\nabla\psi_{\varepsilon}(t,\x)\right] \,dtd\x=0\,,
\end{equation}
because $u$ is a distributional solution of~\eqref{E:CPPDE} with zero initial datum.
We stress that the first equality in~\eqref{E:PDEe} follows by the definition of push-forward measure because the results in~\cite{DPL} establish that the {regular} Lagrangian flow $\RLF$ satisfies the absolute continuity estimate in Definition~\ref{d:rlf}\eqref{item:1RLF}.
This is a very important brick in this disintegration strategy, and in other settings it requires to be proved ad hoc, see for instance~\cite{CD}.

\paragraph*{\textbf{Step 4.}}
We conclude the proof of Claim~\ref{c:claim}, thus establishing Theorem~\ref{T:main} under Assumption~\ref{A:srfe}. The main observation is that equation~\eqref{E:PDEe} gives the validity of Claim~\ref{c:claim} with the test function $\Psi$ replaced by the approximation $\Psi_\varepsilon$ defined in~\eqref{e:capPsi}. Therefore, we simply estimate the integral containing $\Psi$ with the integral containing $\Psi_{\varepsilon}$ plus an error, and we only need to show that the error converges to zero as $\varepsilon\downarrow0$. Indeed, we compute as follows:
%We now compute the integral of $U$ against the time derivative of the Lipschitz continuous test function $\Psi$ that we fixed in Step 0. 
%We briefly describe our strategy.
%We just observed in~\eqref{E:PDEe} the previous step that equation~\eqref{E:lagrangian} holds with $\Psi_{\varepsilon}$ in place of $\Psi$.
%Motivated by this, 
%
%: when $\varepsilon\downarrow0$ the error converges to zero and we recover the vanishing condition in~\eqref{E:lagrangian}.
%We change variable to exploit the information that $u$ is a distributional solution: this however bring us with a test function
\begin{align*}
\int_0^T \int_{\R^n} \frac{U}{R}\,\partial_{t}\Psi\,dtd\y &= 
\cancel{\int_0^T \int_{\R^n} \frac{U}{R}\,\partial_{t}\Psi_{\varepsilon}\,dtd\y}
+\int_0^T \int_{\R^n} \frac{U}{R}\,\partial_{t}\left[\Psi-\Psi_{\varepsilon}\right]\,dtd\y 
\\
&=\cancel{\int_{0}^{T}\int_{K_{\varepsilon}} \frac{U}{R}\,\partial_{t}\left[\Psi-\Psi_{\varepsilon}\right]\,dtd\y} 
	+\int_{0}^{T}\int_{ B_{R}(\0)\setminus K_{\varepsilon} }  \! \frac{U}{R}\,\partial_{t}\left[\Psi-\Psi_{\varepsilon}\right]\,dtd\y \,,
\end{align*}
where $K_\varepsilon$ is as in Step~1, and by construction $\Psi_\varepsilon \equiv \Psi$ on $[0,T] \times K_\varepsilon$. 
Since $\Psi_{}(\cdot,\y)$ is $L$-Lipschitz continuous by definition~\eqref{e:L} and each $\Psi_{\varepsilon}(\cdot,\y)$ is $L'$-Lipschitz continuous by Step~3(\ref{i:szvds}), we finally get 
%the thesis by the absolute continuity of integrals: 
\begin{align*}
\left| \int_0^T \int_{\R^n} \frac{U}{R}\,\partial_{t}\Psi\,dtd\y \right|
\leq 
C(L+L')\int_{0}^{T}\int_{ B_{R}(\0)\setminus K_{\varepsilon} }  \!\!\!\! \!\!\!\! \left| U \right|\,dtd\y 
\quad\xrightarrow{\varepsilon\downarrow0}\quad 0\,, 
\end{align*}
using~\eqref{e:boundrho} and the fact that the function $U$ in~\eqref{E:capU} belongs to $L^1_{\mathrm{loc}}([0,T] \times \R^n)$. This concludes the proof of Theorem~\ref{T:main} under Assumption~\ref{A:srfe}.

%:
%:
\section{Idea of the proof of Assumption~\ref{A:srfe}: directional Lipschitz extension lemma}
\label{S:lemmaLip}

%We comment that this lemma and the concept of $L$-directionally Lipschitz could be rephrased in a more abstract setting with a different foliation and detached from the concept of Regular Lagrangian flow. We better discuss this in~\cite{CC2}.
%This is a byproduct independent of the PDE we are studying. 

We finally briefly sketch the strategy of proof of the following lemma. A full proof in a more general context is deferred to~\cite{CC2}.
\begin{lemma}
\label{l:extension}
Let $\bb$ be a vector field as in~\eqref{E:assumptionB}, with $1<p\leq\infty$. 
Assume in addition that $\bb(t,\cdot)$ is continuous for $\Ll^{1}$-a.e.~$t \in [0,T]$, with modulus of continuity on compact sets which is uniform in time. Then Assumption~\ref{A:srfe} holds.
\end{lemma}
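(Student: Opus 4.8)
The plan is to deduce Lemma~\ref{l:extension} from a self-contained \emph{directional Lipschitz extension} statement for functions defined on the compact flow tube. Precisely, I would isolate the following: if $E:=\{\RLF(t,0,K_\varepsilon)\}_{t\in[0,T]}$ is the compact flow tube of Lemma~\ref{L:sfsdva} and $f\colon E\to\R$ is $M$-Lipschitz and $(L,\RLF)$-directionally Lipschitz (with $M$ possibly enormous, but $L$ fixed), then there exists $\tilde f\colon[0,T]\times\R^{n}\to\R$ extending $f$ that is Lipschitz continuous (with constant comparable to $M$, which is harmless) \emph{and} $(L',\RLF)$-directionally Lipschitz, where $L'$ depends only on $L$ and on quantities attached to $\bb$ (its sup-norm on the relevant compact set and its modulus of continuity) but \emph{not} on $M$, hence not on $\varepsilon$. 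Applying this to $f=\psi$ from~\eqref{E:testtx}, which by Lemma~\ref{L:sfsdva} is Lipschitz (with constant controlled by $\mathrm{Lip}(\RLF(0,t,\cdot)|_{K_\varepsilon})$, and thus $\varepsilon$-dependent) and $(L,\RLF)$-directionally Lipschitz with $L=\mathrm{Lip}(\Psi)$ uniform in $\varepsilon$, produces the function $\psi_{\varepsilon}$ required by Assumption~\ref{A:srfe}, with $L'$ genuinely independent of $\varepsilon$.

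For the construction of $\tilde f$, the first temptation is the classical McShane--Whitney extension of $f$ from the closed set $E$: this yields global Lipschitzness for free, but it destroys the directional information, since a naive partition-of-unity extension on the complement of $E$ has directional Lipschitz constant of order $M$. Indeed, when comparing the locally constant base values one is forced either to make transversal moves inside $E$ (each costing $M$ times a distance) or to slide the ``anchor'' of the relevant distance functions along characteristics (each costing $M\cdot\|\bb\|_{\infty}$ times a time increment), and after dividing by the Whitney scale these reappear as a factor $M$. The construction must instead be adapted to the flow. Passing to the flow coordinates $(t,\y)$ with $\x=\RLF(t,0,\y)$, directional Lipschitz continuity becomes plain Lipschitz continuity in $t$ (cf.\ Definition~\ref{d:directional}), a property the datum already enjoys; so I would build the extension so that, away from a thin layer around $E$, it is propagated \emph{along the characteristics} of $\RLF$, gluing to the prescribed values on $E$ through a transition region whose width is tuned to the Whitney scale. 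Concretely, base points are chosen, and base values compared, only along short pieces of characteristics, so that one may invoke the $(L,\RLF)$-directional bound of $f$ rather than its global Lipschitz bound, paying the unavoidable transversal discrepancy only on length scales small compared to the distance to $E$. Far from $E$ one is free to let $\tilde f$ be, say, constant, so that no directional error is generated there.

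The main obstacle is exactly the control, \emph{uniform in $\varepsilon$}, of this transversal discrepancy: one needs that two characteristics that start at distance $\delta$ apart and evolve for a time comparable to the local Whitney scale drift apart by at most $C\delta$, with an overhead that is negligible relative to $\delta$ — not by $\delta + \|\bb\|_{\infty}\tau$, which would reintroduce a factor $M$. For a merely measurable (or even merely bounded) vector field this is hopeless, and this is the precise point where the hypothesis of Theorem~\ref{T:main} — continuity of $\bb(t,\cdot)$ with modulus of continuity on compact sets uniform in time — is used, through a quantitative short-scale comparison of nearby characteristics driven by the modulus of continuity of $\bb$. This estimate, fed into the bound for the material derivative of $\tilde f$ along characteristics, is the technical heart and the step I expect to be delicate; the restriction $p>1$ intervenes only via the use of Theorem~\ref{T:regularity} to produce $K_{\varepsilon}$. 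A complete proof, in a more general framework, is carried out in~\cite{CC2}; here one would present only the flow-adapted construction and the key estimate just described.
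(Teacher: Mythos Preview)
Your plan is a reasonable sketch and correctly identifies where the continuity hypothesis on $\bb$ enters, but it takes a genuinely different route from the paper's. You propose a flow-adapted Whitney-type construction: propagate values along characteristics away from $E$, glue across a transition layer tuned to the Whitney scale, and control the transversal error using a short-time Gronwall-type comparison of nearby characteristics driven by the modulus of continuity of $\bb$. The paper instead avoids any explicit Whitney machinery. It introduces a one-parameter family of distances $d_\lambda$ on $[0,T]\times\R^n$, each equivalent to the Euclidean distance $d_1$, that penalise non-flow displacements by a factor $\lambda^{-1}$ and satisfy $d_\lambda\uparrow d_0$ as $\lambda\downarrow0$, where $d_0$ is the degenerate distance for which $(L,\RLF)$-directional Lipschitzness is exactly $L$-Lipschitzness. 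The key analytic step is to show that $\mathrm{Lip}(\psi;d_\lambda)\to\mathrm{Lip}(\psi;d_0)=L$ as $\lambda\downarrow0$; this is precisely where the continuity of $\bb$ is used. One then fixes $\bar\lambda$ small and applies the \emph{ordinary} McShane extension for the single metric $d_{\bar\lambda}$: the result is automatically Lipschitz for $d_1$ (equivalence of $d_{\bar\lambda}$ and $d_1$) and $(L_{\bar\lambda},\RLF)$-directionally Lipschitz with $L_{\bar\lambda}$ close to $L$ (since $d_{\bar\lambda}\le d_0$). So rather than discarding McShane as you do, the paper rescues it by changing the metric. What your approach buys is a more explicit, geometric picture of the extension; what the paper's approach buys is that the entire construction collapses to a single McShane formula once the right metric is chosen, and the only analytic work is the convergence $L_\lambda\to L$. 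Your sketch would also need to address that $\RLF(t,0,\cdot)$ is only defined $\Ll^n$-a.e., so ``propagating along characteristics from arbitrary points'' requires care; the metric formulation sidesteps this issue.
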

In the above lemma one can as well require that $\min \psi \leq \psi_{\varepsilon} \leq \max \psi$.

We start by noticing that a function $\psi$ is $(L,\RLF)$-directionally Lipschitz continuous according to Definition~\ref{d:directional} if and only if $\psi$ is $L$-Lipschitz continuous for the following degenerate distance $d_{0}$: %which measure distances only along the flow:
$$
d_{0}\left( (t,\x), (t',\x') \right) :=\begin{cases}|t-t'| & \text{if there exists $\y$ with $\x= \RLF(t,0,\y)$ and $\x'= \RLF(t',0,\y)$,} \\ +\infty &\text{otherwise.}\end{cases}
$$
Moreover, we denote by $d_{1}$ the usual Euclidean distance in $[0,T] \times \R^{n}$.

Consider a Lipschitz continuous function $\psi$ defined on the compact flow tube $\{\RLF(t,0,K_\varepsilon)\}_{t\in[0,T]}$ of Assumption~\ref{A:srfe}. We remind that we assume that $\psi$ is $(L,\RLF)$-directionally Lipschitz continuous, and that we need to extend $\psi$ to $[0,T] \times \R^n$ in such a way that the extension is
\begin{enumerate}
\item $(L,\RLF)$-directionally Lipschitz continuous, i.e.~$L'$-Lipschitz continuous for $d_{0}$, with $L'$ depending on~$L$, and
\item Lipschitz continuous for the Euclidean distance, i.e.~Lipschitz continuous for $d_{1}$.
\end{enumerate}
In other words, we need to prove a Lipschitz extension theorem with respect to two non equivalent distances at the same time: to the best of our knowledge, this is a new and non trivial task. Notice that for our purposes we need that the Lipschitz constant for $d_0$ only depends on $L$, while we do not need a quantitative control on the Lipschitz constant for $d_1$.

We now give a rough idea of the proof of Lemma~\ref{l:extension}. For $0<\lambda<1$ we introduce a family of distances~$d_{\lambda}$, each of them equivalent to the Euclidean distance $d_1$.  The distance $d_{\lambda}$ penalizes with a factor $\lambda^{-1}$ displacements which are not along the flow.
Moreover, the distances $d_\lambda$ converge to the degenerate distance $d_{0}$, i.e.~$d_{\lambda} \uparrow d_{0}$ as $\lambda\downarrow0$.
In particular, a function which is $L'$-Lipschitz continuous for $d_{\lambda}$ is also $L'$-Lipschitz continuous for~$d_{0}$.

The key point in the proof of Lemma~\ref{l:extension} is the fact that, when $\lambda\downarrow0$, the Lipschitz constant of~$\psi$ for~$d_{\lambda}$ converges to the Lipschitz constant $L$ of $\psi$ for $d_{0}$:
\begin{equation}
\label{E:conv}
L_{\lambda}:=\mathrm{Lip}_{}(\psi;d_{\lambda}) \xrightarrow{\lambda\downarrow0} \mathrm{Lip}_{}(\psi;d_{0}) =  L \,.
\end{equation}
Using this property, we choose $\bar\lambda$ small enough so that $L_{\bar\lambda}$ is  close to $L$. We extend $\psi$ by using McShane extension theorem for the distance $d_{\bar \lambda}$. In this way, we get an extension  which is
\begin{enumerate}
\item $(L_{\bar\lambda}, \RLF)$-directionally Lipschitz continuous, and $L_{\bar \lambda}$ is close to $L$, and
\item Lipschitz continuous for the Euclidean distance $d_{1}$, since $d_{\bar \lambda}$ is equivalent to $d_1$.
\end{enumerate}

In the above procedure, we are currently able to prove~\eqref{E:conv} only assuming that the vector field $\bb$ is continuous for $\Ll^{1}$-a.e.~$t \in [0,T]$, with modulus of continuity on compact sets which is uniform in time.

\section*{Acknowledgements}
This work was started during a visit of LC at the University of Basel and carried on during a visit of GC at the University of Padova as a Visiting Scientist. The authors gratefully acknowledge the support and the hospitality of both institutions. LC is a member of the Gruppo Nazionale per l'Analisi Matematica, la Probabilit\`a e le loro Applicazioni (GNAMPA) of the Istituto Nazionale di Alta Matematica (INdAM). GC is partially supported by the ERC Starting Grant 676675 FLIRT.

% The Appendices part is started with the command \appendix;
% appendix sections are then done as normal sections
% \appendix

% \section{}
% \label{}

% The Acknowledgements are an un-numbered section
%\section*{Acknowledgements}
% Acknowledgements text here

\end{document}